\newtheorem{theorem}{Theorem}
\newtheorem{lemma}{Lemma}
\title {Minimal simplicial spherical mappings with a given degree}
\author{Ksenia Apolonskaya and Oleg R. Musin}
\begin{document}
\date{}
\maketitle
	


\begin{abstract} 
This paper studies the minimal number of vertices $\lambda(n,d)$ required in a triangulation of the $n$-sphere to admit a simplicial map to the boundary of a $(n+1)$-simplex with a given degree $d$. We establish upper bounds for $\lambda(n,d)$ in dimensions $n \geq 3$.   Furthermore, we provide exact formulas for small values of $d$, showing that $\lambda(n,d)=n+d+3$ for $n \geq 3$ and $d=2,3,4$. A key technical result is the identity $\lambda(n,d) = \lambda(d-1,d) + n - d + 1$ for $n \geq d$, which allows us to reduce higher-dimensional cases to lower-dimensional ones. The proofs involve constructive methods based on local modifications of triangulations and combinatorial arguments. 
\end {abstract}

\section{Introduction}
The degree of a map between oriented $n$--dimensional manifolds was first defined by Brouwer in 1911. The Hopf theorem states that the {degree of a continuous map} is a topological invariant (see, for instance, \cite{Milnor} and \cite[pp. 44--46]{Mat}). Let $f:M_1\to M_2$ be a smooth map and $y\in M_2$ be a regular value for $f$. Then 
\[
    \deg f = \sum_{x \in f^{-1}(y)} \text{sign}(J_f(x)), \eqno (1)
    \]
    where $J_f(x)$ is the Jacobian of $f$ at point $x$, and $\text{sign}(J_f(x))$ equals $+1$ or $-1$ depending on whether $f$ preserves its orientation at $x$ or reverses it.

There is a simplicial version of (1) and its extension for simplicial maps $f:S^m\to S^n$, see \cite{Mad,MS,MS2,Mus14,Mus16,MusQS}. Let $T_1$ and $T_2$ be triangulations of $M_1$ and $M_2$. Let $f:V(T_1)\to V(T_2)$ be a simplicial map, where $V(T)$ denotes the set of vertices of $T$. Consider an $n$-simplex $s$ of $T_2$ and denote by $\Pi(s)$ the set of preimages of $s$ in $T_1$. Then every $t\in \Pi_f(s)$ is a $n$--simplex in $T_1$. We have $f(t)=s$, $H:=f|_t:t\to s$ is a linear map, and $\det(H)\ne0$. Denote by $\text{sign}(f_t)$ the $\text{sign}$ of $\det(H)$. It can be proved that the sum of all $\text{sign}(f_t)$, $t\in \Pi_f(s)$,  does not depend on $s$ and  
\[
    \deg f = \sum_{t \in \Pi_f(s)} \text{sign}(f_t). \eqno (2)
    \]

In this paper, we consider the case where $M_1$ and $M_2$ are $n$--spheres. Let $T_1$ be a triangulation of  $S^n$ and $T_2$ be the boundary triangulation of $(n+1)$--simplex $\Delta^{n+1}$. We denote it by $S_{n+2}^n$. 
Let $$L:V(T_1) \to V(S_{n+2}^n)=\{1,...,n+2\}$$ be a coloring (labeling) of the vertex set of $T_1$. Then $L$ can be extended linearly to all $k$--dimensional, $k\ge 1$, faces of $T_1$ and we obtain a simplicial map 
$$f_L: {S}^n \rightarrow {S}_{n+2}^n.$$
{\em For any integer $d$, let $\lambda(n,d)$ denote the smallest number of vertices that a triangulation $T_1$ of $n$--sphere must have in order for there to exist a coloring $L:V(T_1) \to V(S_{n+2}^n)$ with $\deg f_L=d$.} 

\begin{figure}[h]
    \centering
    \includegraphics[height = 7cm]{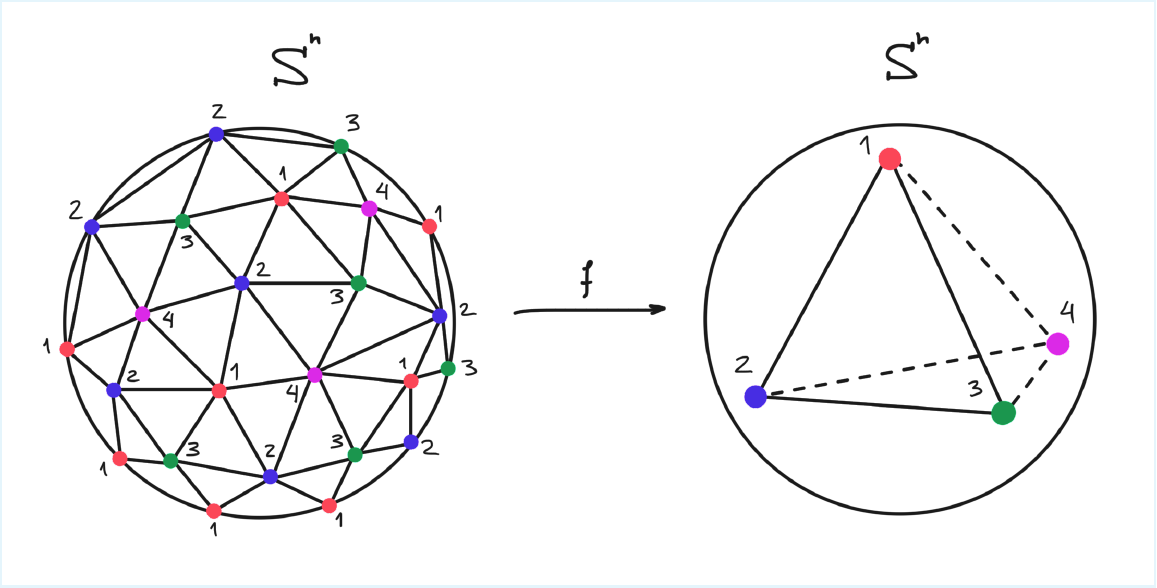}
    \caption{Simplicial mapping between $T_1$ and $\partial \Delta^{n+1}$.}
    \label{fig:1}
\end{figure}

For all $n$ we obviously have 
$$\lambda(n,-d)=\lambda(n,d), \quad \lambda(n,1)=n+2.$$ 

It is clear that if $d>0$ then $\lambda(1,d)=3d$. Indeed, at the vertices of the polygon with $3d$ vertices, we place the labels 123123...123 in a cyclic order.

The case $n=2$ is completely solved by Madahar and  Sarkaria \cite{MS2}. They proved that 
$$
\lambda(2,d)=2|d|+2, \; |d|\ge 3. $$
Recently, some bounds for $\lambda(n,d)$ have been received in \cite{Basak}, in particular 
 $$\lambda(n,d)\le d + n + 3, \quad d\le n.$$ 
 We show that this inequality is equality for $d=2,3,4$, see Theorem 3.

\medskip 

Our main results are the following theorems.
\begin{theorem}
\label{thm:1}
$\lambda(n,d)\leq \frac{n+2}{n}d+(2n+2) $. 
\end{theorem}



\begin{theorem}
\label{thm::2}
$\lambda(n,d)=n+\lambda(d-1,d)-d+1$ for $n\geq d$.
\end{theorem}

\begin{theorem}
\label{thm::3}
$\lambda(n,d)=n+d+3$ for $n\geq d-1$ and $d=2,3,4$.
\end{theorem}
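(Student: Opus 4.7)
The plan is to reduce, via Theorem~\ref{thm::2}, to the base cases $n=d-1$. Since Theorem~\ref{thm::2} asserts $\lambda(n,d)=n+\lambda(d-1,d)-d+1$ for $n\ge d$, the claim $\lambda(n,d)=n+d+3$ for all $n\ge d-1$ is equivalent to the three base equalities $\lambda(d-1,d)=2d+2$ for $d=2,3,4$, together with the trivial identity $(d-1)+d+3=2d+2$ covering the endpoint $n=d-1$.

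Two of these base values are already recorded in the introduction: $\lambda(1,2)=6$ from the cyclic construction, and $\lambda(2,3)=8$ by Madahar--Sarkaria. So the substantive step is to prove $\lambda(3,4)=10$. The upper bound $\lambda(3,4)\le 10$ follows from Basak's estimate $\lambda(n,d)\le n+d+3$ for $d\ge n$. For the lower bound I would use a link-degree argument. Let $T_1$ be a triangulation of $\mathbb{S}^3$ with $V$ vertices carrying a labeling $L:V(T_1)\to\{1,\dots,5\}$ with $\deg f_L=4$, and write $v_i=|L^{-1}(i)|$. If some $v_i=0$, then no $3$-simplex of $T_1$ can cover any facet of $\partial\Delta^4$ containing the label $i$, which would force $\deg f_L=0$; hence every $v_i\ge 1$. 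Next, suppose some $v_i=1$ and let $u$ be the unique vertex labeled $i$. The link $\mathrm{lk}(u)$ is a triangulation of $\mathbb{S}^2$ whose vertices all carry labels in $\{1,\dots,5\}\setminus\{i\}=V(S_4^2)$. Matching preimages of a $2$-simplex $\sigma\subset S_4^2$ under $f_L|_{\mathrm{lk}(u)}$ with preimages of $\{i\}\cup\sigma$ in $\partial\Delta^4$ under $f_L$ yields the summation identity $\sum_{u\in L^{-1}(i)}\deg(f_L|_{\mathrm{lk}(u)})=\pm d$, which in the present case forces $|\deg(f_L|_{\mathrm{lk}(u)})|=4$. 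By Madahar--Sarkaria, $|V(\mathrm{lk}(u))|\ge\lambda(2,4)=10$, so $V\ge 11$. Finally, if every $v_i\ge 2$ then $V=\sum v_i\ge 10$. Either way $V\ge 10$, as required.

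With $\lambda(d-1,d)=2d+2$ in hand, Theorem~\ref{thm::2} instantly yields $\lambda(n,d)=n+(2d+2)-d+1=n+d+3$ for $n\ge d$, and at $n=d-1$ the identity $(d-1)+d+3=2d+2$ gives the endpoint case, completing the proof. The only non-routine point is verifying the link-degree summation identity with the correct orientation conventions; once that is in place, the reduction to the two-dimensional bound $\lambda(2,4)=10$ is immediate, and it is this single base computation that drives the entire theorem.
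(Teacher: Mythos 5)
Your proposal follows essentially the same route as the paper: reduce via Theorem~\ref{thm::2} to the base values $\lambda(1,2)=6$, $\lambda(2,3)=8$, $\lambda(3,4)=10$, and establish the last one by splitting on whether every label class has at least two vertices or some class is a singleton, in which case the problem drops to $\lambda(2,4)=10$ on a $2$-sphere (your link-degree formulation is a cleaner version of the paper's ``remove $U_1$ and the vertices not joined to it'' argument in Lemma~\ref{lem:4}). The only substantive difference is that for the upper bound $\lambda(3,4)\le 10$ you cite Basak's estimate while the paper builds an explicit $10$-vertex triangulation; both are legitimate, since the paper already relies on that estimate elsewhere.
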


\medskip

Proofs of these theorems are given in the next section. Several problems remain here. One of the interesting questions is the following 

\medskip

\noindent {\bf Problem.} {\em Find the minimum $\omega$ such that}
$$\limsup\limits_{d \to \infty}{\frac{\lambda(n,d)}{d^{\,\omega}}}<\infty.$$
Recently, Ryabichev \cite{Ryab} proved that $\omega<1$.

\section{Proof of the main theorems}

\medskip

\subsection{Auxiliary lemmas.}
\begin{lemma}
\label{lem:1}
$\lambda(n+1,d)\leq \lambda(n,d)+1$
\end{lemma}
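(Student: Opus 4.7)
The plan is to implement the passage from $n$ to $n+1$ by a \emph{one-point suspension}, which adds only a single vertex. Let $T_1$ be an optimal triangulation of $S^n$ on $\lambda(n,d)$ vertices, together with a labeling $L:V(T_1)\to\{1,\dots,n+2\}$ of degree $d$. Fix any vertex $v_0\in V(T_1)$ and introduce a new vertex $w$. I would form the simplicial complex $T'$ on $V(T_1)\cup\{w\}$ whose $(n+1)$-simplices are of two types: (i) $F\cup\{w\}$ for every facet $F$ of $T_1$, and (ii) $F\cup\{v_0\}$ for every facet $F$ of $T_1$ with $v_0\notin F$. A routine check (or, equivalently, the observation that $T'$ arises from the classical suspension $\Sigma T_1$ by collapsing the arc joining $v_0$ to one of the suspension poles) shows that $T'$ is a triangulation of $S^{n+1}$ with $\lambda(n,d)+1$ vertices.

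Next I would extend $L$ to $L':V(T')\to\{1,\dots,n+3\}$ by $L'(w):=n+3$; since the target $S^{n+1}_{n+3}$ is the full boundary of $\Delta^{n+2}$, this automatically defines a simplicial map $f_{L'}$. The main work is to verify $\deg f_{L'}=d$, and I would do so via formula (2) at the target facet $s:=\{2,\dots,n+3\}$. Any non-degenerate preimage must carry the label $n+3$, hence must contain $w$, so it is of type (i): namely, $F\cup\{w\}$ for some facet $F$ of $T_1$ with $L(F)=\{2,\dots,n+2\}$. These $F$ are precisely the non-degenerate preimages of the facet $s':=\{2,\dots,n+2\}$ of $S^n_{n+2}$ under $f_L$, and by (2) their signed count equals $\deg f_L=d$.

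It remains to match signs. I would orient $T'$ so that each facet of type (i) inherits the orientation of $F$ with $w$ appended last, and orient $S^{n+1}_{n+3}$ so that $s$ inherits the orientation of $s'$ with $n+3$ appended last. In these orientations the linear map $f_{L'}|_{F\cup\{w\}}$ is block-upper-triangular with the $w$-diagonal block sending $w\mapsto n+3$ with determinant $+1$, so $\mathrm{sign}(f_{L'}|_{F\cup\{w\}})=\mathrm{sign}(f_L|_F)$. Summing over the contributing facets then gives $\deg f_{L'}=\deg f_L=d$, and $|V(T')|=\lambda(n,d)+1$ finishes the lemma.

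The step I expect to be the main obstacle is this sign-matching: the construction is asymmetric (one new source vertex $w$ but also only one new target label $n+3$, whereas classical suspension intuition wants two new points on each side), and one must check that the chosen orientations on $T'$ and on $S^{n+1}_{n+3}$ are globally consistent rather than merely convenient at $s$. If the direct block-triangular computation turns out fiddly, my fallback is the topological route: $T'$ is the quotient of $\Sigma T_1$ by collapsing the arc $[v_0,w^-]$, $S^{n+1}_{n+3}$ is the corresponding quotient of $\Sigma S^n_{n+2}$ identifying one pole with the vertex $L(v_0)$, and under these quotients $f_{L'}$ descends from the classical suspension $\Sigma f_L$; since collapsing contractible arcs is a homotopy equivalence, $\deg f_{L'}=\deg\Sigma f_L=\deg f_L=d$, which closes the argument.
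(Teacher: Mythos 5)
Your proposal is correct and takes essentially the same route as the paper: your one-point suspension $(w * T_1)\cup(v_0 * \mathrm{ast}(v_0))$ is precisely the paper's construction (add one new vertex, cone it over all of $T_1$, and fill the other hemisphere without new vertices --- here by coning $v_0$ over its antistar), with the new vertex receiving the new label $n+3$ and the degree preserved by counting preimages of a target facet containing that label. Your orientation/sign verification is simply a more detailed version of the paper's one-line justification.
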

\begin{proof}
Let \( K \) be a triangulation of \( S^n \) with \( \lambda(n,d) \) vertices. Arbitrarily triangulate the interior of \( K \). Add a new vertex \( x \) and form the join \( K * x \) with the boundary of \( K \). Assign a new color to the vertex \( x \) (see Figure~\ref{fig:10}).

This construction yields a triangulation \( K' \) of \( S^{n+1} \) with \( \lambda(n,d) + 1 \) vertices. The degree remains unchanged: a positively oriented simplex in \( K \) extends to a positively oriented simplex in \( K' \) when joined with \( x \), and similarly for negatively oriented simplices.

\end{proof}
\begin{figure}[h!]
    \centering
    \includegraphics[height = 5.5cm]{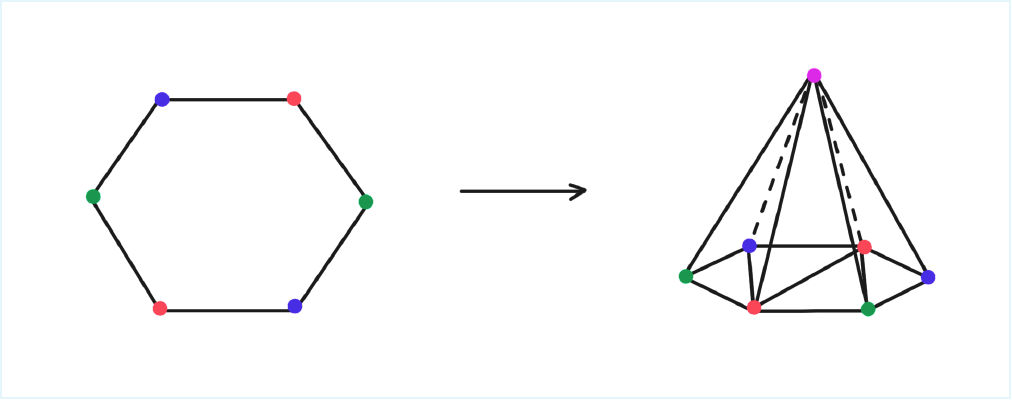}
    \caption{$\lambda(2,d)\le \lambda(1,d)+1$}
    \label{fig:10}
\end{figure}

\begin{lemma}
\label{lem:2}
If $\lambda(n, d)\leq 2n+3$, then  $\lambda(n, d)\geq \lambda(n-1,d)+1$
\end{lemma}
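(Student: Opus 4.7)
The plan is to descend from $S^n$ to $S^{n-1}$ by restricting the coloring to the link of a cleverly chosen vertex. Let $T$ be a triangulation of $S^n$ on $N:=\lambda(n,d)$ vertices together with a coloring $L:V(T)\to\{1,\ldots,n+2\}$ such that $\deg f_L=d$; assume $d\neq 0$ (otherwise the statement is vacuous). Because $N\leq 2n+3<2(n+2)$, pigeonhole produces some color $c$ used on at most one vertex. Moreover, formula $(2)$ applied to any $n$-simplex $s$ of $S^n_{n+2}$ shows $|\Pi_{f_L}(s)|\geq|d|\geq 1$, so every color is in fact used. Hence there is a unique vertex $v\in V(T)$ with $L(v)=c$.

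Next I would look at the link $K:=\mathrm{lk}_T(v)$. Since $T$ is a combinatorial $n$-sphere, $K$ is a combinatorial $(n-1)$-sphere with at most $N-1$ vertices. Because $L$ is simplicial and $c$ has unique preimage $v$, the restriction $L':=L|_{V(K)}$ takes values in $\{1,\ldots,n+2\}\setminus\{c\}$ and sends simplices of $K$ into the link of $c$ in $S^n_{n+2}$, which coincides with $S^{n-1}_{n+1}$. Thus $L'$ induces a simplicial map $f_{L'}:S^{n-1}\to S^{n-1}_{n+1}$, and it suffices to show $\deg f_{L'}=\pm d$; that immediately gives $\lambda(n-1,d)\leq N-1=\lambda(n,d)-1$.

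To compute the degree via formula $(2)$, pick any $(n-1)$-simplex $s'$ of $S^{n-1}_{n+1}$ and set $s:=s'\cup\{c\}$, an $n$-simplex of $S^n_{n+2}$. Uniqueness of the preimage of $c$ makes $t'\mapsto t'\cup\{v\}$ a bijection $\Pi_{f_{L'}}(s')\to\Pi_{f_L}(s)$. The main obstacle is the orientation bookkeeping: one must verify that when $\mathrm{lk}(v)$ is oriented as the boundary of the star of $v$, and $\mathrm{lk}(c)$ is oriented analogously inside $S^n_{n+2}$, the sign of $f_L$ on $t'\cup\{v\}$ equals the sign of $f_{L'}$ on $t'$. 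This reduces to listing $v$ first in every $n$-simplex of $T$ that contains it and $c$ first in $s$, then observing that the two sign computations differ by the same permutations. Granting this standard check, summation yields $\deg f_{L'}=\deg f_L=d$, completing the argument.
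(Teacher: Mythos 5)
Your proof is correct and takes essentially the same route as the paper's: pigeonhole on the $n+2$ colors of a minimal triangulation with at most $2n+3$ vertices to find a color $c$ with a unique preimage $v$, then pass to the link of $v$ (the paper phrases this as removing $v$ together with the interior of the remaining disk) and restrict the coloring to get a degree-$d$ map to $S^{n-1}_{n+1}$ on one fewer vertex. You supply the preimage bijection and orientation bookkeeping that the paper leaves implicit; the only nitpick is that for $d=0$ the inequality is trivially true (since $\lambda(n,0)=n+2$) rather than vacuous, which affects nothing.
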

\begin{proof}
The simplex $\Delta^{n+1}$ has $n+2$ vertices. Suppose the triangulation of the second sphere has at most $2n+3$ vertices. Then, by the pigeonhole principle, some vertex has exactly one preimage. 
Removing this vertex together with the interior of the remaining disk reduces the dimension by 1 while preserving the degree.
\end{proof}
\medskip
\begin{lemma}
\label{lem:3}
If $n\geq d-1$, then $\lambda(n,d)\geq \lambda(d-1, d)+n-d+1$
\end{lemma}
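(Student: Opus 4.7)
The plan is a short induction on $n$ that bootstraps the two preceding auxiliary lemmas. The base case $n=d-1$ is trivial, since the inequality collapses to $\lambda(d-1,d)\geq \lambda(d-1,d)$. For the inductive step at some $n\geq d$, assuming the statement holds at $n-1$, I want to deduce $\lambda(n,d)\geq \lambda(n-1,d)+1$, which together with the inductive hypothesis gives the desired bound.

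The mechanism producing the $+1$ gain is Lemma \ref{lem:2}, which yields exactly this inequality whenever $\lambda(n,d)\leq 2n+3$. So the entire task reduces to verifying that this upper bound holds at every intermediate dimension from $d$ up to $n$. For this I would invoke the Basak bound cited in the introduction, $\lambda(m,d)\leq m+d+3$ for $d\geq m$, which at $m=d-1$ gives $\lambda(d-1,d)\leq 2d+2$. Iterating Lemma \ref{lem:1} from dimension $d-1$ upward then yields
\[
\lambda(n',d)\leq \lambda(d-1,d)+n'-d+1 \leq n'+d+3
\]
for every $n'\geq d-1$. When $n'\geq d$ this right-hand side is at most $2n'+3$, so the hypothesis of Lemma \ref{lem:2} is satisfied at each step, and chaining the inequalities $\lambda(n,d)\geq \lambda(n-1,d)+1\geq\cdots\geq\lambda(d-1,d)+(n-d+1)$ finishes the argument.

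There is no serious obstacle here beyond bookkeeping: the lemma is essentially a telescoping of previously established material. The only subtle point is ensuring that the hypothesis $\lambda(n',d)\leq 2n'+3$ of Lemma \ref{lem:2} persists throughout the descending chain, rather than holding only at the top dimension $n$; this is precisely why the Basak inequality, and not merely the weaker upper bounds obtainable from Lemma \ref{lem:1} alone, is needed in the verification step.
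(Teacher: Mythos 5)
Your proof is correct and follows essentially the same route as the paper's: the gain of $+1$ per dimension comes from Lemma~\ref{lem:2}, whose hypothesis $\lambda(n',d)\le 2n'+3$ is supplied by the upper bound $\lambda(n',d)\le n'+d+3$, and the chain telescopes down to dimension $d-1$. Your verification of that upper bound (Basak's inequality applied only at dimension $d-1$, then propagated upward via Lemma~\ref{lem:1}) is in fact slightly more careful than the paper's, which quotes the Basak bound directly for all $n\ge d$ even though the introduction states it under the condition $d\ge n$.
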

\begin{proof}
We know that for $n\geq d$, $\lambda(n,d)\leq n+d+3$ from Article \cite{Basak}. Then $\lambda(n,d)\leq n+d+3 \leq 2n+3$, and from the previous statement it follows that $\lambda(n,d)\geq \lambda(n-1,d)+1$. Similarly, descending to $n=d$, we obtain that $\lambda(n,d)\geq \lambda(d-1,d)+n-d+1$. 
\end{proof}
\medskip
\begin{lemma}
\label{lem:4}
$\lambda(3,4)=10$
\end{lemma}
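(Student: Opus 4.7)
The plan is to bracket $\lambda(3,4)$ between matching upper and lower bounds obtained from machinery already in place. The upper bound will come from the result of Basak quoted in the introduction, while the lower bound will follow from Lemma \ref{lem:2} combined with the Madahar--Sarkaria formula in dimension two.

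For the upper bound I would simply apply the inequality $\lambda(n,d) \leq n+d+3$ (valid for $d \geq n$, cited from Basak) with $n=3$ and $d=4$, giving $\lambda(3,4) \leq 10$ directly.

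For the lower bound I would argue by contradiction. Suppose $\lambda(3,4) \leq 9$. Since $9 = 2 \cdot 3 + 3$, the hypothesis of Lemma \ref{lem:2} is satisfied, yielding
$$\lambda(3,4) \geq \lambda(2,4) + 1.$$
By the Madahar--Sarkaria formula $\lambda(2,d) = 2|d|+2$ for $|d| \geq 3$, we have $\lambda(2,4) = 10$, so $\lambda(3,4) \geq 11$, contradicting the supposition $\lambda(3,4) \leq 9$. Hence $\lambda(3,4) \geq 10$, which combined with the upper bound gives the desired equality.

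The argument is essentially a bookkeeping exercise once Lemma \ref{lem:2} and the Madahar--Sarkaria value are in hand; I do not foresee any substantial obstacle. The one subtle point worth flagging is that Basak's bound $n+d+3$ and the threshold $2n+3$ of Lemma \ref{lem:2} coincide exactly at $(n,d)=(3,4)$ (both equal $10$), so this lemma sits on the sharp boundary where both tools apply and interact tightly — this is precisely what allows the dimensional descent to force the matching lower bound.
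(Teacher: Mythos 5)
Your argument is correct, and it splits from the paper's proof mainly on the upper bound. For the lower bound you and the paper do essentially the same thing: the paper inlines the pigeonhole-plus-descent argument (either all five color classes have at least two vertices, giving $10$, or some class is a singleton, in which case passing to its link forces at least $\lambda(2,4)+1=11$ vertices), whereas you package the identical dichotomy as a proof by contradiction through Lemma~\ref{lem:2}; the two are logically equivalent, and your version is arguably cleaner since it reuses a lemma already proved. For the upper bound the paper constructs an explicit $10$-vertex triangulation (inserting a new vertex into each facet of $\partial\Delta^4$), while you simply invoke the cited bound $\lambda(n,d)\le n+d+3$ for $d\ge n$ at $(n,d)=(3,4)$; this is legitimate given how the paper itself quotes that result, at the cost of self-containedness (and note the paper is inconsistent about whether the hypothesis is $d\ge n$ or $n\ge d$ --- your application needs the $d\ge n$ version, which is the one stated in the introduction). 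One factual slip in your closing remark: $2n+3=9$ at $n=3$, not $10$, so Basak's bound and the threshold of Lemma~\ref{lem:2} do \emph{not} coincide at $(3,4)$ --- indeed the true value $\lambda(3,4)=10$ fails the hypothesis of Lemma~\ref{lem:2}, which is precisely why your contradiction framing (rather than a direct application) is needed. This does not affect the validity of the proof itself.
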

\begin{proof}
We first prove that $\lambda(3,4) \geq 10$.\\
The simplex $\Delta^4$ has five vertices. Let $U_i$ denote the set of preimages of vertex $i$, for $i=1,\dots,5$. If $|U_i| \geq 2$ for all $i$, then $\lambda(3,4) \geq 10$.

Now suppose $|U_1| = 1$. Remove $U_1$ together with all vertices not connected to it and the corresponding edges. The result is a triangulation of $S^2$ with degree $4$, which requires at least $\lambda(2,4) = 10$ vertices. Adding back $U_1$ gives at least $11$ vertices.

We now construct an example with exactly $10$ vertices. Start with the standard simplex $u_1 u_2 u_3 u_4 u_5$. Consider the face $u_1 u_2 u_3 u_4$ and insert vertex $w_5$, replacing it with simplices $u_1 u_2 u_3 u_5, \dots, u_2 u_3 u_4 u_5$. Repeat this process for all other faces, adding vertices $w_1, w_2, w_3, w_4$.

The resulting triangulation has $10$ vertices and four preimages of $v_1 v_2 v_3 v_4$ (namely $w_1 v_2 v_3 v_4$, $v_1 w_2 v_3 v_4$, $v_1 v_2 w_3 v_4$, $v_1 v_2 v_3 w_4$), all negatively oriented. Permuting the vertex ordering in $v_1 v_2 v_3 v_4 v_5$ yields degree $d=4$.

Thus, $\lambda(3,4) \geq 10$ and $\lambda(3,4) \leq 10$, so $\lambda(3,4) = 10$.
\end{proof}

\subsection{Proof of Theorem 1.}
\begin{proof}

We consider the number \( d = kn + l \), where \( k \in \mathbb{Z}_{\geq 0} \), \( 0 \leq l < n \). In the case \( k=0 \), we have \( d = l \).

The theorem was proved by Basak, Gupta, and Trivedi \cite{Basak}, but we provide an alternative proof.
 
 \medskip

\noindent {\bf i. Construction of the example.}

Initially, take the simplex \( u_1 u_2 \dots u_{n+2} \). Insert a new vertex \( w_{n+2} \) inside the face \( u_1 \dots u_{n+1} \). Now the map \( f \) has degree \( \deg f = 0 \). Recall that \( f \) maps each vertex \( u_i, w_i \) to \( v_i \) on the boundary \( \partial \Delta^{n+2} \).

If \( l = 0 \), we stop here. Otherwise, add vertices \( w_1, w_2, \dots, w_l \), inserting each into the corresponding simplex. This creates \( l \) positively oriented preimages, so \( d = l \). The number of vertices at this stage is \( n + 3 + l \).

\begin{figure}[h]
    \centering
    \includegraphics[height=5cm]{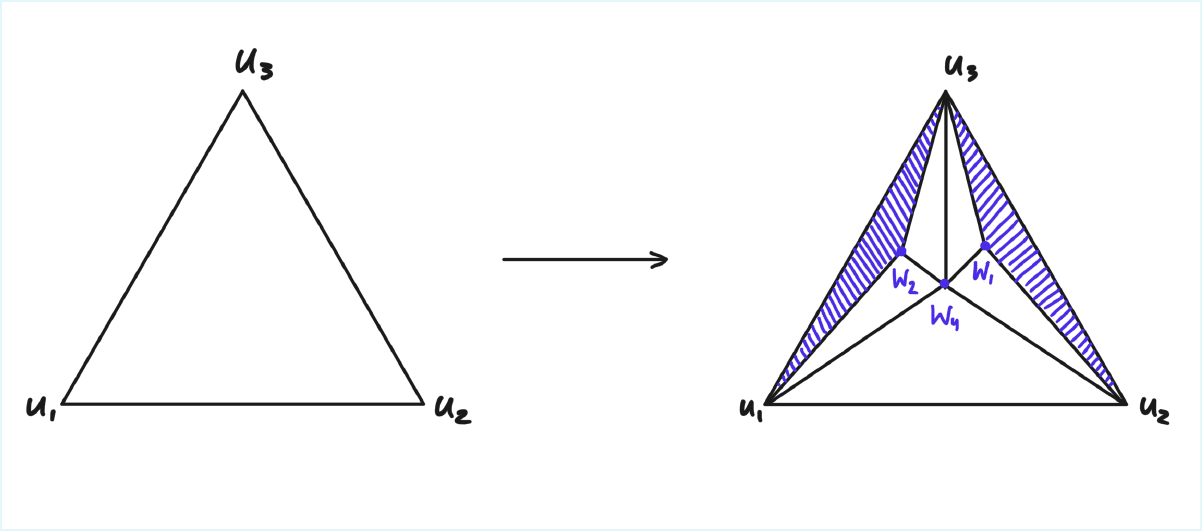}
    \caption{Case \( n=2, l=2 \)}
    \label{fig:step1}
\end{figure}

 \medskip

\noindent {\bf ii. Increasing degree and vertices for \( k > 0 \).}

For \( k > 0 \), we increase the degree by \( n \) and the number of vertices by \( n+2 \) at each step, repeating the operation \( k \) times.

Consider a positively oriented simplex in the triangulation:
\[
u = u_1 u_2 \dots u_{n+1}.
\]

Insert a vertex \( w_{n+2} \) inside \( u \). Define
\[
a_k = u_1 \dots u_{k-1} u_{k+1} \dots u_{n+1} w_{n+2}, \quad 1 \leq k \leq n+1.
\]
This operation replaces the simplex \( u \) by \( n+1 \) simplices \( a_1, a_2, \dots, a_{n+1} \).

Next, insert vertices \( w_i \) inside each \( a_i \) for \( 1 \leq i \leq n+1 \). Thus, one positive simplex is replaced by \( n+1 \) positive simplices.

This results in \( n+1 \) positive preimages of \( v_1 \dots v_{n+1} \), so the degree increases by \( n \), and the number of vertices increases by \( n+2 \).

\begin{figure}[h]
    \centering
    \includegraphics[height=5cm]{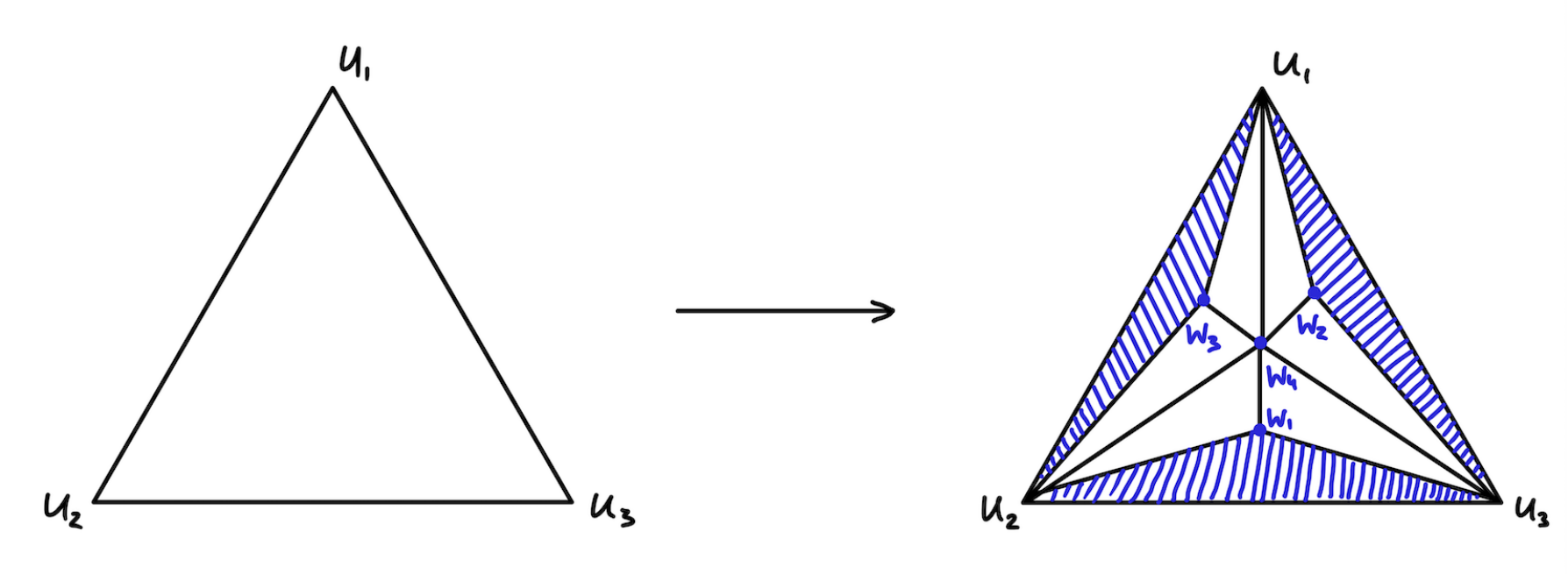}
    \caption{One step of the process for \( n=2 \). Number of vertices increases by 4; degree increases by 2}
    \label{fig:step2}
\end{figure}

 \medskip

\noindent {\bf iii. Final vertex count.}

Performing this operation \( k \) times, each time finding a positively oriented simplex and replacing it by inserting \( n+2 \) new vertices:

- The initial degree is \( l \leq n \).
- The initial number of vertices is \( n + 3 + l \).
- After \( k \) steps, the number of vertices increases by \( k(n+2) \).

Thus, for \( d = kn + l \), the total number of vertices satisfies
\[
n + 3 + l + k(n+2) \leq (n+2) k + 2n + 2.
\]
Using \( d = kn + l \), we have the estimate
\[
|V| \leq \frac{n+2}{n} d + 2n + 2.
\]
\end{proof}

\subsection{Proof of Theorem 2.}
\begin{proof}
From Lemma~\ref{lem:1}, we have $\lambda(n,d) \leq \lambda(n-1,d) + 1$. Iterating this inequality gives
\[
\lambda(n,d) \leq \lambda(d-1,d) + n - d + 1.
\]
The construction in the previous section shows $\lambda(n,d) \geq \lambda(d-1,d) + n - d + 1$. Therefore,
\[
\lambda(n,d) = \lambda(d-1,d) + n - d + 1.
\]
\end{proof}


\subsection{Proof of Theorem 3.}
\begin{proof}
By Theorem~\ref{thm::2}, we have $\lambda(n,d) = \lambda(d-1,d) + n - d + 1$.

The base cases are $\lambda(1,2) = 6$, $\lambda(2,3) = 8$, and $\lambda(3,4)$ from Lemma~\ref{lem:4}. Thus,
\begin{align*}
\lambda(n,2) &= n + 5, \\
\lambda(n,3) &= n + 6, \\
\lambda(n,4) &= n + 7.
\end{align*}
\end{proof}

 \bigskip

\noindent {\bf {Acknowledgments.}}
Ksenia Apolonskaya is supported by the ``Priority 2030'' strategic academic leadership program. The author thanks the Summer Research Programme at MIPT - LIPS-25 for the opportunity to work on this and other problems.

\medskip

 \bigskip 
 
 K. Apolonskaya, Saint Petersburg State University, Dept. of Mathematics and Computer Science
 
 {\it E-mail address:} apolonskayaks@gmail.com
 
 \medskip
 O. R. Musin,  University of Texas Rio Grande Valley, School of Mathematical and
 Statistical Sciences, One West University Boulevard, Brownsville, TX, 78520, USA.

 {\it E-mail address:} oleg.musin@utrgv.edu



\end{document}